\documentclass{amsart}

\usepackage{amsmath,amsthm,amssymb,url}


\newtheorem{theorem}{Theorem}[section]
\newtheorem{lemma}[theorem]{Lemma}

\newcommand{\ph}{\varphi}
\newcommand{\NN}{\mathbb{N}}

\newcommand{\ZZ}{\mathbb{Z}}

\newcommand{\st}{\; | \;}

\newcommand{\mdl}[1]{{\mathcal #1}}

\title{Ultraproducts and metastability}

\author{Jeremy Avigad}
\address{Department of Philosophy and Department of Mathematical
  Sciences\\
Carnegie Mellon University\\
Pittsburgh, Pennsylvania 15213}
\email{avigad@cmu.edu}

\author{Jos\'e Iovino}
\address{Department of Mathematics\\
The University of Texas at San Antonio \\
  San Antonio, Texas 78249
and Department of Mathematical Sciences\\
Carnegie Mellon University\\
Pittsburgh, Pennsylvania 15213}
\email{iovino@math.utsa.edu}

\subjclass[2010]{46B08, 03C20, 37A30}

\keywords{Ultraproducts, metastability, ergodic theorems}

\thanks{Avigad's work has been partially supported by NSF grant DMS-1068829 and AFOSR grant FA9550-12-1-0370. We are grateful to Ulrich Kohlenbach and Terence Tao for helpful suggestions and corrections.}


\begin{document}

\begin{abstract}
Given a convergence theorem in analysis, under very general conditions a model-theoretic compactness argument implies that there is a uniform bound on the rate of metastability. We illustrate with three examples from ergodic theory.
\end{abstract}

\maketitle

\section{Introduction}
\label{introduction:section}

Convergence theorems in analysis are often disappointingly nonuniform. For example, Krengel \cite{krengel:78} has shown, roughly speaking, that even if one fixes an ergodic measure preserving system, the convergence of averages guaranteed by the mean ergodic theorem can be arbitrarily slow. Our goal here is to show that even in such cases, a compactness argument can often be used to establish a weaker uniformity, namely, the existence of uniform bounds on the rate of metastable convergence. 

If $(a_n)_{n \in \NN}$ is a sequence of elements in a metric space $(X, d)$, saying that $(a_n)$ is Cauchy is equivalent to saying that, for every $\varepsilon > 0$ and function $F : \NN \to \NN$, there is an $n$ such that $d(a_i,a_j) < \varepsilon$ for every $i, j \in [n, F(n)]$. Think of $F$ as trying to disprove the convergence of $(a_n)$ by finding intervals where the sequence fluctuates by more than $\varepsilon$; the $n$ asserted to exist foils $F$ in the sense that the sequence remains $\varepsilon$-stable on $[n, F(n)]$. We will call a bound on such an $n$, depending on $F$ and $\varepsilon$, a \emph{bound on the rate of metastability}.

The arguments below show that, in many convergence theorems, there is a bound on the rate of metastability that depends on only a few of the relevant parameters. All is that required is that the class of structures in question, and the hypotheses of the theorem, are preserved under a certain model-theoretic ultraproduct construction in which these parameters remain fixed. A sufficient condition for this can be formulated in syntactic terms, by asserting that the the relevant hypotheses and axioms can be put in a certain logical form. Section~\ref{background:section} summarizes the necessary background on ultraproducts in analysis, and presents a theorem which characterizes the existence of a uniform bound on the rate of metastability of a collection of sequences in terms of the convergence of ultraproducts of those sequences. Section~\ref{examples:section} illustrates the use of this equivalence with three examples from ergodic theory.

Metastability has proved useful in ergodic theory and ergodic Ramsey theory \cite{green:tao:08,tao:06,tao:08}; see also \cite[Sections 1.3--1.4]{tao:08b}, and \cite{avigad:et:al:10,avigad:dean:rute:12,kohlenbach:leustean:09,kohlenbach:10,kohlenbach:12,kohlenbach:leustean:12,kohlenbach:schade:12} for various instances of metastability in analysis. Sometimes stronger uniformities are available than the ones we consider here, in the form of variational inequalities (e.g.~\cite{jones:et:al:96,jones:et:al:98,jones:et:al:03,avigad:rute:unp}). Bergelson et al.~\cite{bergelson:et:al:00} explore aspects of uniformity in ergodic theory and ergodic Ramsey theory, but most of the methods there rely on specific combinatorial features of the phenomena under consideration.

The methods developed here complement proof-theoretic methods developed by Kohlenbach and collaborators, e.g.~in \cite{kohlenbach:05,gerhardy:kohlenbach:08}. Roughly, those methods provide ``metatheorems'' which show that when a statement with a certain logical form is derivable in a certain (fairly expressive) axiomatic theory, certain uniformities always obtain. The arguments we present here replace derivability in an axiomatic system with closure under the formation of ultraproducts. Indeed, it seems likely that such arguments can be used to establish general metatheorems likes the ones in \cite{kohlenbach:05,gerhardy:kohlenbach:08}, by considering ultraproducts of models of the axiomatic theories in question. 

It is worth noting that although the methods we describe here can be used to establish the existence of a very uniform bound, they give no explicit quantitative information at all, nor even show that it is possible to \emph{compute} such a bound as a function of $F$ and $\varepsilon$. In contrast, the proof-theoretic techniques provide ways that such information can be ``mined'' from a specific proof. In particular, the general metatheorems described in the last paragraph guarantee that the associated bounds are computable. If one is primarily interested in uniformity, however, the methods here have the virtue of being easy to understand and apply.

\section{Ultraproducts of Banach spaces}
\label{background:section}

In this section we review standard ultraproduct constructions in analysis; see \cite{ben:yaacov:et:al:08,heinrich:80,henson:iovino:02,loeb:wolf:00} for more details.

Let $I$ be any infinite set, and let $D$ be a nonprincipal ultrafilter on $I$. (Below, we will always take $I$ to be $\NN$.) Any bounded sequence $(r_i)_{i \in I}$ of real numbers has a unique limit $r$ with respect to $D$, written $r = \lim_{i,D} r_i$; this means that for every $\varepsilon > 0$ the set $\{ i \in I \st | r_i - r | < \varepsilon \}$ is in $D$. Suppose that for each $i$, $(X_i, d_i)$ is a metric space with a distinguished point $a_i$. Let
\[
 X_\infty = \Big\{ (x_i) \in \prod_{i \in I} X_i \; \big| \; \sup_i d(x_i, a_i) < \infty \Big\} \mathop{\mbox{\larger\larger{$/$}}} \sim,
\]
where $(x_i) \sim (y_i)$ if and only if $\lim_{i,D} d(x_i, y_i) = 0$. Let $d_\infty$ be the metric on $X_\infty$ defined by $d_\infty((x_i), (y_i)) = \lim_{i,D} d(x_i, y_i)$. Leaving the dependence on the choice of the base points $a_i$ implicit, we will call this an \emph{ultraproduct} of the metric spaces $(X_i, d_i)$, denoted by $\left( \prod_{i \in I} (X_i, d_i) \right)_D$. If there is a uniform bound on the diameters of these spaces, the choice of the sequence $(a_i)$ of ``anchor points'' is clearly irrelevant.

This ultraproduct construction is an instance of Luxemburg's nonstandard hull construction \cite{luxemburg:69}. We can extend it to ultraproducts of a sequence $(X_i)$ of normed spaces using $a_i = 0$ and the distance given by the norm. Ultraproducts of Banach spaces were introduced by Dacunha-Castelle and Krivine \cite{dacunha:castelle:krivine:72}, and are an important tool in a number of branches of analysis (see e.g.~\cite{henson:iovino:02}).

In first-order model theory, one can take an ultraproduct of any sequence of structures $\mdl M_i$, and {\L}os's theorem says that any first-order sentence $\ph$ is true in the ultraproduct if and only if it is true in almost every $\mdl M_i$, in the sense of $D$; in other words, if and only if $\{ i \st \mdl M_i \models \ph \} \in D$. The constructions above, however, are not ultraproducts in the first-order sense, since we restrict to ``finite'' elements, mod out by infinitesimal proximity $\sim$, and (implicitly, by taking limits with respect to $D$) pass to the standard part of nonstandard distances and norms. This gives rise to two complications.

First, if we extend the metric or normed spaces with other functions, their lifting to the ultraproduct will not be well defined if they fail to map finite elements to finite elements, or fail to respect $\sim$. We can lift, however, any family $(f_i)$ of functions that satisfies an appropriate \emph{uniform boundedness condition} (roughly, elements of the family are uniformly bounded on bounded sets around the base point) and an appropriate \emph{uniform continuity condition} (which is to say that there is a uniform modulus of uniform continuity on such sets). The resulting function on the ultraproduct will be denoted $\left(\prod_i f_i\right)_D$. For details, see \cite[Section 4]{henson:iovino:02} or \cite[Section 4]{ben:yaacov:et:al:08}.

Second, {\L}os's theorem needs to be modified. One strategy, described in \cite{henson:iovino:02}, is to restrict attention to a class of \emph{positively bounded formulas}. These are formulas generated from atomic formulas $r \leq t$ and $t \leq r$, where $t$ is an appropriate term and $r$ is rational, using only the positive connectives $\land$ and $\lor$, as well as universal and existential quantification over compact balls in the structure. An \emph{approximation} to such a formula is obtained by replacing each $r$ in an atomic formula $r \leq t$ by any $r' < r$, and each $r$ in an atomic formula $t \leq r$ by any $r' > r$. Say that a formula $\ph$ with parameters is \emph{approximately true} in a structure if every approximation $\ph'$ to $\ph$ is true in the structure. One can then show that if $a_1, \ldots, a_n$ are elements of the ultraproduct with each $a_j$ represented by the sequence $(a_{j, i})_{i \in I}$, then a positively bounded formula $\ph(a_1, \ldots, a_n)$ is approximately true in the ultraproduct $\left(\prod_{i \in I} \mdl M_i\right)_D$ if and only if 
\[
   \{ i \in I \st \mdl M_i \models \ph'(a_{1,i}, \ldots, a_{n,i}) \} \in D
\]
for every approximation $\ph'$ to $\ph$.

Suppose $\Gamma$ is a set of positively bounded sentences, and $C$ is the class of structures that approximately satisfy each sentence in $\Gamma$. The previous equivalence implies that $C$ is closed under ultraproducts. In fact, Henson and Iovino \cite[Proposition 13.6]{henson:iovino:02} show that a class of structures $C$ can be axiomatized in this way if and only if $C$ is closed under isomorphisms, ultraproducts, and ultraroots. 

Another strategy, described in \cite{ben:yaacov:et:al:08}, is to modify first-order semantics so that formulas take on truth values in a bounded interval of reals, in which case the truth value of a formula $\ph$ in the ultraproduct is the $D$-limit of its truth values in the individual structures. Spelling out the details here would take us too far afield. Below we will only use the fact that certain classes of structures and hypotheses are preserved under ultraproducts, as well as the easy fact that a quantifier-free positively bounded formula $\ph$ is true in a structure if and only if every approximation to it is true, thereby simplifying the equivalence above.

If the ultrafilter $D$ is nonprincipal, an ultraproduct $(\prod_{i \in I} \mdl M_i)_D$ of metric spaces or normed spaces is \emph{$\aleph_1$-saturated}, or \emph{countably saturated}, in the following sense: if $\Gamma$ is a countable set of positive bounded formulas of the form $\ph(x_1,\dots, x_n)$ such that every finite set of approximations of formulas in $\Gamma$ is satisfied by some $n$-tuple  of elements of $(\prod_i \mdl M_i)_D$, then there exists an $n$-tuple  of elements of $(\prod_i \mdl M_i)_D$ that satisfies all the formulas in $\Gamma$ \cite[Proposition 9.18]{henson:iovino:02}.  In particular, every ultraproduct over a nonprincipal ultrafilter is metrically complete \cite[Proposition 9.21]{henson:iovino:02}.  (For an arbitrary infinite cardinal $\kappa$, the concept of $\kappa$-saturation is defined similarly, by replacing $\aleph_1$ above with $\kappa$. If the cardinality of $I$ is sufficiently large relative to $\kappa$, the structure  $(\prod_{i \in I} \mdl M_i)_D$ can be made be $\kappa$-saturated with a careful choice of $D$ \cite[Theorem 10.8]{henson:iovino:02}.)

The following theorem provides a neat characterization of the relationship between convergence in ultraproducts and uniformity.
 
\begin{theorem}
\label{main:theorem}
Let $C$ be any collection of pairs $((X, d), (a_n)_{n \in \NN})$, where each $(a_n)$ is a sequence of elements in the corresponding metric space $(X, d)$. For any nonprincipal ultrafilter on $D$, the following statements are equivalent:
\begin{enumerate}
\item There is a uniform bound on the rate of metastability for the sequences $(a_n)$. In other words, for every $F : \NN \to \NN$ and $\varepsilon > 0$, there is a $b$ with the following property: for every pair $((X, d), (a_n)_{n \in \NN})$ in $C$, there is an $n \leq b$ such that $d(a_i, a_j) < \varepsilon$ for every $i, j \in [n, F(n)]$.
\item For any sequence $((X_k, d_k), (a^k_n))_{k \in \NN}$ of elements of $C$, let $(\bar X, \bar d)$ be the ultraproduct $\left(\prod_{k \in \NN} (X_k, d_k) \right)_D$, and for each $n$ let $\bar a_n$ be the element of $(\bar X, \bar d)$ represented by $(a^k_n)_{k \in \NN}$. Then for any $\varepsilon > 0$ and $F : \NN \to \NN$, there is an $n$ such that $\bar d (\bar a_i, \bar a_j) < \varepsilon$  for every $i, j \in [n, F(n)]$.
\item For any sequence $((X_k, d_k), (a^k_n))_{k \in \NN}$ of elements of $C$, the sequence $(\bar a_n)$ is Cauchy.
\item For any sequence $((X_k, d_k), (a^k_n))_{k \in \NN}$ of elements of $C$, the sequence $(\bar a_n)$ converges in $(\bar X, \bar d)$.
\end{enumerate}
\end{theorem}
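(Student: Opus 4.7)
The plan is to prove the cycle $(1) \Rightarrow (2) \Rightarrow (3) \Rightarrow (4) \Rightarrow (2)$ and then close with $(2) \Rightarrow (1)$ by contrapositive. Two of the implications are essentially free: $(2) \Leftrightarrow (3)$ is just the general equivalence between ``Cauchy'' and ``metastable'' for a sequence in any metric space (noted in the introduction), applied to the sequence $(\bar a_n)$ in $(\bar X, \bar d)$, and $(3) \Leftrightarrow (4)$ follows from the fact, recalled in the paragraph on countable saturation, that ultraproducts over nonprincipal ultrafilters are metrically complete.

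For $(1) \Rightarrow (2)$, fix $F : \NN \to \NN$ and $\varepsilon > 0$, and apply the uniform bound to $F$ and $\varepsilon/2$ to obtain $b$. For each $k$, pick $n_k \leq b$ witnessing the metastability of $(a^k_n)$ with tolerance $\varepsilon/2$. Since $\{0,1,\dots,b\}$ is finite, exactly one value $n^* \in \{0,\dots,b\}$ satisfies $\{k \st n_k = n^*\} \in D$. For every $i, j \in [n^*, F(n^*)]$ we then have $d_k(a^k_i, a^k_j) < \varepsilon/2$ on a set in $D$, hence $\bar d(\bar a_i, \bar a_j) \leq \varepsilon/2 < \varepsilon$, as required.

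For $(2) \Rightarrow (1)$, suppose $(1)$ fails. Then there exist $F$ and $\varepsilon$ such that for every $k \in \NN$ there is some pair $((X_k, d_k), (a^k_n))$ in $C$ for which \emph{no} $n \leq k$ satisfies $d_k(a^k_i, a^k_j) < \varepsilon$ on $[n, F(n)]$. Use these pairs to form the ultraproduct. Given any fixed $n$, for every $k \geq n$ there exist indices $i_k, j_k \in [n, F(n)]$ with $d_k(a^k_{i_k}, a^k_{j_k}) \geq \varepsilon$. The interval $[n, F(n)]$ is finite, so by pigeonhole applied to the ultrafilter there is a single pair $(i,j) \in [n, F(n)]^2$ with $\{k \st (i_k, j_k) = (i,j)\} \in D$, giving $\bar d(\bar a_i, \bar a_j) \geq \varepsilon$. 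Since $n$ was arbitrary, no $n$ can witness metastability of $(\bar a_n)$ at tolerance $\varepsilon$, contradicting $(2)$.

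The main subtlety, and the only place the argument really uses the structure of the ultraproduct in a nontrivial way, is the double pigeonhole on the ultrafilter in the two directions $(1)\Leftrightarrow(2)$: over the finite interval $\{0,\dots,b\}$ to convert uniform metastability in $C$ into a single metastability witness in the ultraproduct, and over the finite interval $[n, F(n)]$ to transport a failure of metastability upward from the factors. One cosmetic point to watch is the gap between strict and non-strict inequalities introduced by $D$-limits of distances, which is absorbed by the standard $\varepsilon/2$ trick used in $(1)\Rightarrow(2)$.
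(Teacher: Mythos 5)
Your proof is correct and takes essentially the same route as the paper: the same reduction of (2)--(4) to the metric-completeness of the ultraproduct, the same $\varepsilon/2$ argument for $(1)\Rightarrow(2)$, and the same choice of a counterexample for each candidate bound $b=k$ in the contrapositive of $(2)\Rightarrow(1)$. The only difference is presentational: where the paper transfers the quantifier-free positively bounded formulas $\exists n \leq b \; \forall i,j \in [n,F(n)]\; d(a_i,a_j) \leq \varepsilon/2$ and $\exists i,j \in [n,F(n)]\; d(a_i,a_j) \geq \varepsilon$ to the ultraproduct, you unwind that transfer by hand as an ultrafilter pigeonhole over the finite sets of witnesses $\{0,\dots,b\}$ and $[n,F(n)]^2$ followed by taking $D$-limits of distances, which is exactly the mechanism underlying the paper's appeal to the model-theoretic machinery.
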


\begin{proof}
As noted in the introduction, the conclusions of (2) and (3) are equivalent in any metric space, and they are clearly equivalent to the conclusion of (4) given the fact that the ultraproduct is complete.

To prove that (1) implies (2), fix  a rational $\varepsilon > 0$ and $F : \NN \to \NN$, and a sequence $((a^k_n), (X_k, d_k)_{k \in \NN})$ of elements of $C$. By (1), there is a $b$ with the property that for every $k$, there is an $n \leq b$ such that $d_k(a^k_i, a^k_j) < \varepsilon / 2$ for every $i, j \in [n, F(n)]$. Thus, for each $k$, the statement $\exists {n \leq b} \; \forall {i, j \in [n, F(n)]} \; d_k(a^k_i, a^k_j) \leq \varepsilon / 2$ is true in $(X_k, d_k)$. Since the existential quantifier can be replaced by finite disjunction and the universal quantifier can be replaced by a finite conjunction, this is equivalent to a quantifier-free positively bounded formula. Hence $\exists {n \leq b} \; \forall {i, j \in [n, F(n)]} \; \bar d(\bar a_i, \bar a_j) \leq \varepsilon / 2$ is true of $(\bar X, \bar d)$, as required.

Conversely, suppose the conclusion of (1) fails for $F$ and $\varepsilon$. For each $k$ in $\NN$, choose a counterexample to the claim for $b = k$, that is, a pair $((X_k, d_k), (a^k_n)_{n \in \NN})$ such that for every $n \leq k$ there are $i, j \in [n, F(n)]$ such that $d_k(a^k_i, a^k_j) \geq \varepsilon$. Then for each $n$ the statement $\exists {i, j \in [n, F(n)]} \; d_k(a^k_i, a^k_j) \geq \varepsilon$ is true for all but finitely many $k$, which implies that $\exists {i, j \in [n, F(n)]} \; \bar d(\bar a_i, \bar a_j) \geq \varepsilon$ is true of $(\bar X, \bar d)$.
\end{proof}

Tao \cite{tao:blog:12} makes use of the equivalence between (2) and (3). In all the applications below, we will use only the implication from (4) to (1), in situations where the metric spaces bear additional structure that is preserved under the formation of ultraproducts.

\section{Applications}
\label{examples:section}

Let $T$ be any nonexpansive operator on a Hilbert space, $\mdl H$, let $f$ be any element of $\mdl H$, and for each $N \geq 1$ let $A_N f$ denote the ergodic average $\frac{1}{n} \sum_{n <N} T^n f$. Riesz's generalization of von Neumann's mean ergodic theorem states that the sequence $(A_N f)$ of averages converges in the Hilbert space norm. The following generalization is due to Lorch \cite{lorch:39}, but also a consequence of results of Riesz~\cite{riesz:38}, Yosida~\cite{yosida:38}, and Kakutani~\cite{kakutani:38} from around the same time (see \cite[p.~73]{krengel:85}). A linear operator $T$ on a Banach space $\mdl B$ is \emph{power bounded} if there is an $M$ such that $\| T^n \| \leq M$ for every $n$.

\begin{theorem}
\label{strong:met}
If $T$ is any power-bounded linear operator on a reflexive Banach space $\mdl B$, and $f$ is any element of $\mdl B$, then the sequence $(A_N f)_{N \in \NN}$ converges. 
\end{theorem}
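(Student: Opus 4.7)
The plan is to combine power-boundedness with reflexivity in the classical weak-compactness/Hahn-Banach argument that underlies most proofs of the mean ergodic theorem. Setting $M = \sup_n \|T^n\|$, one has $\|A_N f\| \leq M\|f\|$ for every $N$, so the sequence $(A_N f)$ is bounded. Because $\mdl B$ is reflexive, its closed balls are weakly compact (Kakutani's theorem), so some subsequence $A_{N_k} f$ converges weakly to an element $g \in \mdl B$. From
\[
 \|T A_N f - A_N f\| = \frac{\|T^N f - f\|}{N} \leq \frac{(1+M)\|f\|}{N} \to 0,
\]
passing to the weak limit along the subsequence gives $Tg = g$, and hence $A_N g = g$ for every $N$.

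The next step is to show that $f - g$ lies in the norm closure $Y$ of $\{h - Th \st h \in \mdl B\}$. Suppose, toward a contradiction, that $f - g \notin Y$; then Hahn-Banach yields a $\phi \in \mdl B^*$ that vanishes on $Y$ and satisfies $\phi(f - g) \neq 0$. But vanishing on $Y$ is precisely the condition $T^* \phi = \phi$, which gives $\phi(T^n f) = \phi(f)$ for all $n$, and hence $\phi(A_N f) = \phi(f)$ for every $N$. Applying $\phi$ to the weakly convergent subsequence $A_{N_k} f \to g$ then forces $\phi(g) = \phi(f)$, contradicting $\phi(f - g) \neq 0$.

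It remains to show that $A_N h \to 0$ in norm for every $h \in Y$; applied to $h = f - g$, this gives $A_N f = A_N g + A_N(f - g) \to g$, as desired. For $h = h_0 - T h_0$ this is the telescoping bound $\|A_N h\| = \|h_0 - T^N h_0\|/N \leq (1+M)\|h_0\|/N$, and a standard $\varepsilon/3$ argument using $\|A_N\| \leq M$ extends this to arbitrary $h \in Y$. The main obstacle is the first step: reflexivity is used in an essential way to extract the weak cluster point $g$ (the conclusion genuinely fails for power-bounded operators on general Banach spaces), and the Hahn-Banach argument in the second paragraph is what promotes the weak subsequential limit to a norm limit of the \emph{full} sequence.
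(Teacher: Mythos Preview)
Your argument is correct and is essentially the classical Riesz--Yosida--Kakutani proof of the mean ergodic theorem in reflexive spaces. Note, however, that the paper does not give its own proof of this theorem: it is stated as a known result and attributed to Lorch, Riesz, Yosida, and Kakutani, with a reference to Krengel's book. So there is nothing in the paper to compare your argument against beyond the observation that you have reproduced precisely the line of reasoning the cited authors used; the paper simply invokes the theorem as a black box in the proof of Theorem~\ref{uniform:met}. One small point worth making explicit: when you pass from weak compactness of closed balls to the existence of a weakly convergent \emph{subsequence}, you are implicitly using the Eberlein--\v{S}mulian theorem, which is standard but worth naming.
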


As noted in Section~\ref{introduction:section}, even in the original von Neumann setting there is no uniform bound on the rate of convergence. Indeed, Fonf, Lin, and Wojtaszczyk \cite{fonf:et:al:01} show that if $\mdl B$ is a Banach space with basis, then there is a uniform bound on the rate of convergence in Theorem~\ref{strong:met} if and only if $\mdl B$ is finite dimensional. Moreover, in the general case, a rate of convergence is not necessarily computable from the given data \cite{avigad:simic:06,vyugin:97}; see also the discussion in \cite[Section 5]{avigad:et:al:10}. However, we can obtain a strong uniformity if we shift attention to metastability. 

\begin{theorem}
\label{uniform:met}
Let $C$ be any class of Banach spaces with the property that the ultraproduct of any countable collection of elements of $C$ is a reflexive Banach space. For every $\rho > 0$, $M$, and function $F : \NN \to \NN$, there is $K$ such that the following holds: given any Banach space $\mdl B$ in $C$, any linear operator on $\mdl B$ satisfying $\| T^n \| \leq M$ for every $n$, any $f \in \mdl B$, and any $\varepsilon > 0$, if $\| f \| / \varepsilon \leq \rho$, then there is an $n \leq K$ such that $\| A_i f - A_j f \| < \varepsilon$ for every $i, j \in [n, F(n)]$.
\end{theorem}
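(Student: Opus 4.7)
The plan is to invoke the equivalence of conditions (1) and (4) of Theorem \ref{main:theorem}. First, since $\|f\|/\varepsilon \leq \rho$ and both sides of the target inequality $\|A_i f - A_j f\| < \varepsilon$ scale linearly in $f$, I would rescale: replacing $f$ by $f/\varepsilon$ reduces matters to the case of a fixed $\varepsilon = 1$ and $\|f\| \leq \rho$. With this reduction, let $C_0$ denote the collection of pairs $((\mdl B, d), (A_N f)_{N \in \NN})$, where $\mdl B$ ranges over members of $C$, $T$ ranges over linear operators on $\mdl B$ with $\|T^n\| \leq M$ for every $n$, $f$ ranges over elements of $\mdl B$ with $\|f\| \leq \rho$, and $d$ is the metric induced by the norm. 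The goal is then to verify condition (4) of Theorem \ref{main:theorem} for $C_0$.

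Given a countable sequence of elements of $C_0$ arising from triples $(\mdl B_k, T_k, f_k)$, I would form the ultraproduct $\bar{\mdl B} = \left(\prod_{k \in \NN} \mdl B_k\right)_D$. By hypothesis on $C$, $\bar{\mdl B}$ is a reflexive Banach space. Since $\|T_k\| \leq M$ uniformly in $k$, the family $(T_k)$ satisfies the uniform boundedness and uniform continuity conditions needed to lift to a bounded linear operator $\bar T$ on $\bar{\mdl B}$ with $\|\bar T^n\| \leq M$ for every $n$. Likewise, the uniformly bounded sequence $(f_k)$ represents an element $\bar f \in \bar{\mdl B}$. A direct check using the fact that $A_N$ is a finite linear combination of powers of $T$ shows that the element $\bar a_N$ represented by $(A_N f_k)_k$ coincides with $A_N \bar f$ as computed inside $\bar{\mdl B}$. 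Applying Theorem \ref{strong:met} to $\bar T$, $\bar f$, and $\bar{\mdl B}$ yields the convergence of $(A_N \bar f)_N$, which is precisely condition (4). The implication (4)$\Rightarrow$(1) of Theorem \ref{main:theorem} then produces a uniform bound $K$ (for $\varepsilon = 1$), and undoing the rescaling proves the theorem.

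The main technical ingredient, which I would invoke rather than reprove, is the standard lifting of bounded linear operators to ultraproducts discussed in Section \ref{background:section}; the uniform bound $M$ on $\|T_k^n\|$ in both $n$ and $k$ is exactly what the framework requires so that $\bar T$ is well defined and $\|\bar T^n\| \leq M$. Given that step, the fact that ergodic averages commute with the ultraproduct is immediate since each $A_N$ is a fixed finite linear combination of iterates of $T$. The rest is an instance of the general template: take a classical convergence theorem (here Lorch's) whose hypotheses are stable under ultraproducts, use it inside the ultraproduct, and transfer back via Theorem \ref{main:theorem} to obtain uniform metastability.
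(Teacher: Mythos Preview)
Your proof is correct and follows essentially the same approach as the paper's: reduce by scaling to a fixed $\varepsilon$, form the ultraproduct, lift the operators using the uniform bound $\|T_k\|\leq M$, apply Lorch's theorem (Theorem~\ref{strong:met}) in the reflexive ultraproduct, and invoke the implication (4)$\Rightarrow$(1) of Theorem~\ref{main:theorem}. The only cosmetic difference is that the paper normalizes to $\|f\|\leq 1$ and $\varepsilon = 1/\rho$, whereas you normalize to $\varepsilon = 1$ and $\|f\|\leq \rho$; these are equivalent rescalings.
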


\begin{proof}
Scaling, we can restrict attention to elements $f$ such that $\| f \| \leq 1$. Fix $\rho > 0$ and $M$, and set $\varepsilon = 1 / \rho$. We apply Theorem~\ref{main:theorem} to the class of pairs $( (\mdl B, T, f), (A_n f)_{n \in \NN} )$, where $\mdl B$ is in $C$, $T$ is a linear operator satisfying $\| T^n \| \leq M$ for every $n$, and $\| f \| \leq 1$.

Let $( (\mdl B_k, T_k, f_k), (A_n f_k))_{k \in \NN}$ be any sequence of elements of that class. The fact that $\| T_k \| \leq M$ for every $k$ guarantees that the family $(T_k)$ satisfies the uniform boundedness and uniform continuity conditions. Let $\mdl B = \left( \prod_k \mdl B_k \right)_D$ be the Banach space ultraproduct, and set $T = \left(\prod_k T_k\right)_D$ and $f = \left(\prod_k f_k\right)_D$. By hypothesis, $\mdl B$ is reflexive, and so Theorem~\ref{strong:met} implies that $(A_n f)$ converges in $\mdl B$. By Theorem~\ref{main:theorem}, this implies that there is a uniform bound on the rate of metastability for the sequences $(A_n f)$ occurring in $C$.
\end{proof}

The class $C$ of \emph{all} reflexive Banach spaces does not satisfy the hypothesis of Theorem~\ref{uniform:met}, which is to say, an ultraproduct of reflexive Banach spaces need not be reflexive. However, there are interesting classes $C$ to which the theorem applies. For example, every uniformly convex Banach space is reflexive, and if one fixes a modulus of uniform convexity, the class of uniformly convex spaces with that modulus is closed under ultraproducts. Thus, Theorem~\ref{uniform:met} guarantees the existence of a uniform bound on the rate of metastability that depends only on $\rho$, $M$, $F$, and the modulus of uniform convexity. Avigad and Rute \cite{avigad:rute:unp} show that any such bound for uniformly convex spaces has to depend on the modulus of uniform convexity, and that there is single separable, reflexive, strictly convex Banach space for which the conclusion of Theorem~\ref{uniform:met} fails.

In the case of a linear operator on a uniformly convex Banach space that is either nonexpansive or power-bounded from above and below, Avigad and Rute \cite{avigad:rute:unp} provide a variational inequality which implies an explicit uniform bound on the number of $\varepsilon$-fluctuations of the sequence $(A_n f)$, in terms of $\rho$ and the modulus of uniform convexity. We do not know the extent to which this stronger uniformity extends. (Safarik and Kohlenbach \cite{safarik:kohlenbach:unp} provide some general conditions that guarantee that it is possible to compute a bound on the number of $\varepsilon$-fluctuations.)

For another example of a class $C$ to which Theorem~\ref{uniform:met} applies, say that a Banach space $\mdl B$ is \emph{$J\mbox{-}(n, \varepsilon)$ convex} if for every $x_1, \ldots, x_n$ in the unit ball of $\mdl B$ there is a $j$, $1 \leq j \leq n$, such that
\[
 \big\| \sum_{i < j} x_i - \sum_{i \geq j} x_i \big\| \leq n (1 - \varepsilon).
\]
A space is \emph{$J$-convex} if and only if it is $J\mbox{-}(n, \varepsilon)$ convex for some $n \geq 2$ and $\varepsilon > 0$. Pisier \cite{pisier:11} shows that a Banach space is $J$-convex if and only if it is super-reflexive, so, in particular, every $J$-convex space is reflexive. Moreover, it is immediate from the form of the definition that, for fixed $n \geq 2$ and $\varepsilon > 0$, the class of $J\mbox{-}(n, \varepsilon)$ convex Banach spaces is closed under ultraproducts. Thus, Theorem~\ref{uniform:met} once again guarantees the existence of a uniform bound on the rate of metastability that depends only on $\rho$, $M$, $F$, $n$, and $\varepsilon$. Note that for $n = 2$, a space is $J\mbox{-}(n, \varepsilon)$ convex for some $\varepsilon > 0$ if and only if it is \emph{uniformly non-square}, a weakening of strict convexity due to James \cite{james:64}.

The list of classes of structures to which Theorem~\ref{uniform:met} applies can easily be extended. For example, we can obtain many classes of spaces that satisfy the hypothesis of that theorem by simply fixing bounds on appropriate parameters in the various
characterizations of superstability given by Pisier in Chapter 3 of \cite{pisier:11}. Other examples of classes of reflexive spaces that are closed under formation of ultraproducts can be found in \cite{levy:raynaud:84,raynaud:02,poitevin:raynaud:08}.

We now consider two additional examples, with respect to which notions of metastability have been considered in the past. For the first example, we consider extensions of the mean ergodic theorem to ``diagonal averages.'' Furstenberg's celebrated ergodic-theoretic proof of Szemer\'edi's theorem involves averages of the form 
\[
\frac{1}{n} \sum_{i < n} f_1(T_1^{-i} x) \cdots f_j(T_j^{-i} x)
\]
where $T_1,\ldots,T_j$ are commuting measure-preserving transformations of a finite measure space $(X, \mdl X, \mu)$. Settling a longstanding open problem, Tao \cite{tao:08} showed that such sequences always converge in the $L^2(X)$ norm. This result was recently generalized by Walsh \cite{walsh:12}, as follows:

\begin{theorem}
\label{walsh:theorem}
Let $(X, \mdl X, \mu)$ be a finite measure space with a measure-preserving action of a nilpotent group $G$. Let $T_1, \ldots, T_l$ be elements of $G$, and let 
\[
(p_{i, j})_{i = 1, \ldots, l; j = 1, \ldots, d}
\]
be a sequence of integer-valued polynomials on $\ZZ$. Then for any $f_1, \ldots, f_d \in L^\infty(X, \mdl X, \mu)$, the sequence of averages
\[
 \frac{1}{N} \sum_{n = 1}^N \prod_{j = 1}^d \left(T_1^{p_{1,j}(n)} \cdots T_l^{p_{l, j}(n)} \right) f_j
\]
converges in the $L^2(X)$ norm.

\end{theorem}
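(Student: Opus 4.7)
The theorem is a deep result cited from Walsh \cite{walsh:12}, so my proof plan is essentially to retrace the strategy of his original argument rather than to find an independent approach. The key idea is to induct on a suitable complexity measure attached to the tuple of polynomials $(p_{i,j})$, reducing each inductive step to a system of strictly smaller complexity via van der Corput's lemma (Cauchy--Schwarz applied in a shifted form).

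First I would introduce a complexity invariant for the system $(p_{i,j})$ that takes into account both the degrees of the polynomials and the differences $p_{i,j}(n+h) - p_{i,j}(n)$; a lexicographic ordering on such data guarantees well-foundedness of the induction. The base case, when the complexity is minimal, reduces to an application of the mean ergodic theorem (which in the Hilbert space setting is an instance of Theorem~\ref{strong:met}).

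For the inductive step, suppose convergence fails. One extracts a subsequence along which the $L^2$ norms of differences between the averages for two large indices are bounded away from zero, then applies van der Corput's inequality to rewrite an inner product of two such differences as an average (over shifts $h$) of correlation-type expressions. In the shifted system, one of the polynomials $p_{i,j}(n+h) - p_{i,j}(n)$ has strictly lower degree than the original $p_{i,j}$; at the same time, the action of $G$ on the new averages involves conjugates of the original group elements by $T_i^h$, generating commutators that live deeper in the lower central series of $G$. The inductive hypothesis applied to this new system then supplies convergence, contradicting the assumption.

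The main obstacle is precisely this non-commutativity: in the abelian case degree reduction proceeds cleanly, but in a nilpotent group each application of van der Corput produces commutator terms that must be absorbed into the complexity measure so that the induction terminates. Designing the complexity measure to simultaneously reflect polynomial degrees and nesting depth in the lower central series of $G$ is the delicate part of the argument, and the step for which I would need to invest the most care when filling in the details.
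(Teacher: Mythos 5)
The paper does not prove Theorem~\ref{walsh:theorem} at all: it is quoted from Walsh \cite{walsh:12} and used as a black box, the paper's own contribution being the derivation of Theorem~\ref{uniform:walsh:theorem} from it via the ultraproduct argument. So there is no internal proof to compare against, and your plan to ``retrace Walsh's strategy'' is in effect a citation accompanied by a sketch. If your intent is simply to invoke \cite{walsh:12}, that is exactly what the paper does, and nothing more is needed.

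Judged as a proof attempt, however, the sketch has a genuine gap: it defers precisely the step on which the theorem turns. A van der Corput step does lower polynomial degrees, but the shifted system involves new group elements (conjugates by $T_i^h$ and the resulting commutators) and new functions, and a bare lexicographic induction on degrees together with depth in the lower central series is exactly the naive scheme that does not close; this is why norm convergence even for commuting transformations resisted proof until Tao \cite{tao:08}, and why the infinitary proofs \cite{austin:10,host:09,towsner:09b} proceed instead through characteristic factors, pleasant or sated extensions. Walsh's actual argument supplies the missing mechanism that your outline only gestures at: a finitary inductive scheme over families of ``reducible'' functions combined with an $L^2$ energy-increment argument, which establishes the metastable statement (essentially Theorem~\ref{uniform:walsh:theorem}) directly and with uniform bounds, norm convergence being a consequence; in particular it does not argue by contradiction via subsequence extraction as in your step two. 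Acknowledging that the design of the complexity measure ``is the delicate part'' names the obstacle but does not overcome it, so as it stands the proposal is a plan rather than a proof.
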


When the relevant data $\vec T, \vec p$ are clear, it will be convenient to write $A_N(\vec f)$ for these averages. Once again, a compactness argument yields the following uniformity:

\begin{theorem}
\label{uniform:walsh:theorem}
For every $r$, $l$, $d$, $s$, $\rho > 0$, and function $F : \NN \to \NN$, there is a $K$ such that the following holds: given a nilpotent group $G$ of nilpotence class at most $r$, elements $T_1, \ldots, T_l$ in $G$, a sequence $(p_{i, j})_{i = 1, \ldots, l; j = 1, \ldots, d}$
of integer-valued polynomials on $\ZZ$ of degree at most $s$, a probability space $(X, \mdl X, \mu)$, a measure-preserving action of $G$ on $(X, \mdl X, \mu)$, and any sequence of elements $f_1, \ldots, f_d \in L^\infty(X, \mdl X, \mu)$, if $\| f_i \|_\infty / \varepsilon \leq \rho$ for each $i$, then there is an $n \leq K$ such that $\| A_i(\vec f) - A_j(\vec f) \| < \varepsilon$ for every $i, j \in [n, F(n)]$.
\end{theorem}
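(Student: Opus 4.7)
The proof mirrors that of Theorem~\ref{uniform:met}. After an appropriate multilinear rescaling of the $f_j$'s, apply Theorem~\ref{main:theorem} to the class $C$ of pairs $((L^2(X, \mdl X, \mu), \|\cdot\|_2), (A_n(\vec f))_{n \in \NN})$ arising from data $(G, T_1, \ldots, T_l, (p_{i,j}), (X, \mdl X, \mu), f_1, \ldots, f_d)$ satisfying the hypotheses of the theorem. By the implication (4)~$\Rightarrow$~(1) of Theorem~\ref{main:theorem}, it suffices to verify that, along any nonprincipal ultrafilter $D$ on $\NN$ and any countable sequence of such data indexed by $k \in \NN$, the ultraproduct sequence $(\bar A_n)$ of averages converges in the ultraproduct of the $L^2$ spaces.

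Form the ultraproduct. The group ultraproduct $\bar G = (\prod_k G_k)_D$ is nilpotent of class at most $r$, since this is captured by a universal identity on iterated commutators of length $r{+}1$. The Banach-space ultraproduct of the $L^2(X_k)$ is realized as $L^2(\bar X, \bar{\mdl X}, \bar \mu)$ for a probability space obtained via the Loeb construction, and it carries a natural measure-preserving action of $\bar G$; the uniform boundedness and continuity conditions needed to lift each $T_i^k$ are trivially satisfied, since each $T_i^k$ acts as an isometry. The functions $\bar f_j = [(f_j^k)]_D$ lie in $L^\infty(\bar X)$ with norms uniformly controlled by $\rho$.

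A complication is that the ultraproduct polynomials $\bar p_{i, j}(n) = \sum_{m=0}^s \bar c_{m,i,j}\, n^m$ have coefficients $\bar c_{m,i,j}$ in the nonstandard integers $\ZZ^{*} = (\prod_k \ZZ)_D$, so Theorem~\ref{walsh:theorem} cannot be applied directly. The remedy is to observe that powers of a single group element commute, so that
\[
\bar T_i^{\bar p_{i,j}(n)} \;=\; \prod_{m=0}^s \bar W_{m,i,j}^{\,n^m}, \qquad \bar W_{m,i,j} \;:=\; \bar T_i^{\bar c_{m,i,j}} \in \bar G,
\]
the right-hand side being well-defined as an internal element of $\bar G$. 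Enumerate the $L' = (s+1)ld$ elements $\bar W_{m,i,j}$ as $\bar S_1, \ldots, \bar S_{L'}$ so that those with a fixed $j$ occupy a consecutive block whose order matches the order of the pairs $(i, m)$ in the product above, and pad with trivial factors $\bar S_q^{0}$ outside each block. One then obtains
\[
\bar A_N(\vec{\bar f}) \;=\; \frac{1}{N} \sum_{n=1}^N \prod_{j=1}^d \bigl(\bar S_1^{Q_{1,j}(n)} \cdots \bar S_{L'}^{Q_{L',j}(n)}\bigr) \bar f_j,
\]
where each $Q_{q,j}$ is either identically zero or a monomial $n^m$ with $m \leq s$, hence a standard integer-valued polynomial of degree at most $s$. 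Theorem~\ref{walsh:theorem} applied to $\bar G$, the generators $\bar S_1, \ldots, \bar S_{L'}$, the polynomials $Q_{q,j}$, and the functions $\bar f_j \in L^\infty(\bar X)$ now yields convergence of $(\bar A_N)$ in $L^2(\bar X)$, and the conclusion follows from Theorem~\ref{main:theorem}. The main obstacle is precisely this reindexing step, which is needed to trade the nonstandard polynomial coefficients for additional internal generators of the ultraproduct group without violating the degree bound $s$ or the nilpotence-class bound $r$.
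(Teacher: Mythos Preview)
Your proof is correct, and it takes a genuinely different route from the paper's. Both arguments face the same obstacle: after forming the ultraproduct, the polynomial exponents acquire nonstandard integer coefficients, so Theorem~\ref{walsh:theorem} cannot be applied directly. The paper resolves this \emph{before} taking the ultraproduct, by invoking Lemma~\ref{nilpotent:trick:lemma} (a construction of Leibman, with a nilpotence bound supplied by Gruenberg): each polynomial sequence $T_1^{p_1(n)}\cdots T_l^{p_l(n)}$ is lifted to a sequence of the form $\tau^n c$ in a nilpotent extension $\hat G$, at the cost of increasing the nilpotence class in a controlled way depending on $r$, $l$, and $s$. After this reduction the only exponent is $n$ itself, so nothing goes wrong in the ultraproduct. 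Your approach instead works \emph{after} the ultraproduct: you absorb each nonstandard coefficient $\bar c_{m,i,j}$ into a new internal generator $\bar W_{m,i,j}=\bar T_i^{\,\bar c_{m,i,j}}\in\bar G$, leaving only the standard monomial exponents $n^m$. This trades the nilpotent extension of Lemma~\ref{nilpotent:trick:lemma} for a longer list of generators, keeps the nilpotence class at $r$, and avoids the Leibman--Gruenberg machinery entirely; it is the more elementary of the two. One small imprecision worth noting: the Banach-space ultraproduct of the $L^2(X_k)$ embeds isometrically into, but need not coincide with, $L^2$ of the Loeb space; this does not affect the argument, since convergence in the larger space yields the Cauchy condition~(3) of Theorem~\ref{main:theorem} in the subspace.
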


As above, we can restrict attention to the case where $\| f_i \|_\infty \leq 1$ in the statement of the theorem, and without loss of generality we can assume that $G$ is generated by $T_1, \ldots, T_l$. An ultraproduct construction due to Loeb~\cite{loeb:75}, analogous to the constructions described in Section~\ref{introduction:section}, can be used to amalgamate a sequence of measure spaces $(X_k, \mdl X_k, \mu_k)$ to a measure space $(X, \mdl X, \mu)$, and since first-order properties of discrete structures are preserved under ultraproducts, the ultraproduct of a sequence $(G_k)$ of groups of nilpotence class at most $r$ is again a group of nilpotence class at most $r$. A measure-preserving action of each $G_k$ on $(X_k, \mdl X_k, \mu_k)$ gives rise to a measure-preserving action of $G$ on $(X, \mdl X, \mu)$, and the product of the spaces $L^2(X_k, \mdl X_k, \mu_k)$ embeds isometrically into the space $L^2(X, \mdl X, \mu)$ (see, for example, \cite[Section 5]{heinrich:80}).

There is a catch, though: the ultraproduct of a sequence of polynomials $p_k$ with coefficients in $\ZZ$ need not be a polynomial, since the coefficients can ``go off to infinity.'' One could rule that out by assuming that there is a uniform bound on those coefficients, in which case the value $K$ in the statement of the theorem would depend on that bound as well. As it turns out, however, in this particular case there is a trick that eliminates the dependence on this parameter. Call a sequence $(g_n)$ of elements of the form $g_n = T_1^{p_{1}(n)} \cdots T_l^{p_{l}(n)}$ a \emph{polynomial sequence}.
\begin{lemma}
\label{nilpotent:trick:lemma}
 Let $G$ be a nilpotent group, and let $(g_n)$ be a polynomial sequence of elements of $G$. Then there are a nilpotent extension $\eta : \hat G \to G$ and elements $\tau$ and $c$ of $\hat G$ such that for every $n$, $g_n = \eta(\tau^n c)$. Moreover, there is a bound on the nilpotence class of $\hat G$ that depends only on bounds on the nilpotence class of $G$, the number $l$ of polynomials, and a bound on their degrees.
\end{lemma}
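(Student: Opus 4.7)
The plan is to put every polynomial sequence of the given shape into a canonical Hall--Petresco normal form and then realize that form as the image of a single linear orbit $\tau^n c$ inside a larger nilpotent group whose class can be bounded in terms of the original data. First, I would reduce to the universal case: the map $x_i \mapsto T_i$ from the free nilpotent group $F = F_l^{(r)}$ of class $r$ on $l$ generators onto the subgroup of $G$ generated by $T_1,\ldots,T_l$ lets one construct $\hat G$ and $\eta$ for the universal sequence $x_1^{p_1(n)} \cdots x_l^{p_l(n)}$ in $F$ and then compose with the homomorphism $F \to G$.

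Next, writing each $p_i(n) = \sum_{j=0}^{s} a_{i,j}\binom{n}{j}$ in the binomial basis and using the identity $T^{p(n)} = \prod_j (T^{a_j})^{\binom{n}{j}}$, I would express $g_n$ as a noncommuting product of factors of the form $U^{\binom{n}{j}}$. Repeated applications of the Hall--Petresco identity
\[
(ab)^n = a^n b^n [b,a]^{\binom{n}{2}} \cdots,
\]
together with an induction on the nilpotence class, then collect these factors into the normal form
\[
g_n = h_0 \, h_1^{\binom{n}{1}} h_2^{\binom{n}{2}} \cdots h_D^{\binom{n}{D}},
\]
where each $h_k$ lies in the $k$-th term of the lower central series of $F$ and the length $D$ is bounded in terms of $r$, $l$, and $s$ alone.

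To linearize this, I would build $\hat G$ by coupling $F$ with a nilpotent ``shift'' group whose distinguished generator $\tau$, when raised to the $n$-th power, produces all the binomials $\binom{n}{k}$ with $k \leq D$ simultaneously. A convenient realization sits inside $(D+1) \times (D+1)$ upper triangular unipotent matrices, where the superdiagonal shift $N$ satisfies $(I+N)^n = \sum_{k=0}^{D} \binom{n}{k} N^k$; a suitable extension positions the $h_k$ into the appropriate matrix coordinates so that $\tau^n c$ encodes the whole normal form. The map $\eta : \hat G \to G$ then recovers $g_n$ by reading off the packaged product, and the nilpotence class of $\hat G$ is bounded in terms of $D$ and $r$, hence of $r$, $l$, and $s$.

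The main obstacle will be the combinatorial bookkeeping in the normal-form step: commuting binomial-indexed factors past one another via Hall--Petresco creates new factors indexed by higher binomial coefficients, and one must verify that this process terminates at a depth $D$ depending only on $r$, $l$, and $s$. Once that uniform termination bound is in hand, the construction of $\hat G$ and the verification of its class bound reduce to unwinding the unipotent matrix model.
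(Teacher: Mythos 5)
Your opening steps are fine in spirit: reducing to the free nilpotent group of class $r$ on $l$ generators, rewriting the exponents in the binomial basis, and collecting via Hall--Petresco to a normal form $g_n = h_0 h_1^{\binom{n}{1}}\cdots h_D^{\binom{n}{D}}$ with $h_k$ deep in the lower central series and $D$ bounded in terms of $r$, $l$, $s$ is the standard description of a polynomial sequence, and the bookkeeping you flag as the ``main obstacle'' is routine. But the real content of the lemma is the step you dispose of in one sentence: building $\hat G$, $\tau$, $c$ and a genuine group homomorphism $\eta:\hat G\to G$ with $g_n=\eta(\tau^n c)$. The matrix heuristic $(I+N)^n=\sum_k\binom{n}{k}N^k$ only manufactures the integer exponents; the $h_k$ are noncommuting group elements, not entries of a ring, so ``positioning the $h_k$ into the appropriate matrix coordinates'' is not a construction, and naive products or direct sums fail precisely because the cross-commutators among the $h_k$ (and between them and the shift) are what must be controlled. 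What is actually needed is a nilpotent group $\tilde G$ covering $G$ together with a \emph{unipotent automorphism} $\tau$ and an element $c$ with $g(n)=\eta(\tau^n(c))$ --- for instance the shift acting on a suitable group of polynomial sequences --- and one must check that $\eta$ can be arranged to be an honest homomorphism on the enlarged group (simply declaring $\eta(\tau)=1$ in a semidirect product is not automatically multiplicative), since the application requires lifting the measure-preserving $G$-action along $\eta$.

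Moreover, even once $\tilde G$ and the unipotent $\tau$ are in hand, the assertion that the extension $\hat G$ of $\tilde G$ by $\tau$ is nilpotent with class bounded only in terms of $r$, $l$, $s$ is a genuine theorem about extensions by unipotent automorphisms, not something that ``reduces to unwinding the unipotent matrix model.'' This is exactly how the paper treats the lemma: it is quoted as a special case of Leibman's construction (Proposition 3.14 of \cite{leibman:05} produces $\tilde G$, the unipotent automorphism $\tau$, and $c$; Proposition 3.9 shows the extension by $\tau$ is nilpotent), with Proposition 1 of Gruenberg \cite{gruenberg:59} supplying the quantitative bound on the nilpotence class. As it stands, your proposal restates the conclusion (that the binomial normal form can be linearized inside a bounded-class nilpotent extension) rather than proving it; to make it a proof you would need to carry out the extension construction explicitly and invoke or reprove a Hall--Gruenberg type nilpotency bound for it.
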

Via $\eta$, the action of $G$ on $X$ lifts to an action of $\hat G$ on $X$, whereby the action of $g_n$ lifts to the action of $\tau^n c$. Applying the lemma $d$ times, we can thus assume that each polynomial sequence $g_{i,n} = T_1^{p_{i,1}(n)} \cdots T_l^{p_{i,l}(n)}$ appearing in the statement of Walsh's theorem is of the form $\tau_i^n c_i$ for some $\tau_i$ and $c_i$ in $G$, at the expense of increasing the nilpotence rank of $G$.

Lemma~\ref{nilpotent:trick:lemma} is a special case of a construction carried out by Leibman \cite{leibman:05} in the more general setting of an action of Lie group, with both continuous and discrete elements. We are grateful to Terence Tao for bringing this lemma to our attention, and pointing out that it can be used to obtain a stronger uniformity in the statement of Theorem~\ref{uniform:walsh:theorem}. As Leibman points out, an instance of this trick was used by Furstenberg \cite[page 31]{furstenberg:81}. Leibman's construction can be divided into two parts: Proposition 3.14 of \cite{leibman:05} shows how to define a nilpotent extension $\eta : \tilde G \to G$, a unipotent automorphism $\tau$ of $\tilde G$, and an element $c$ of $\tilde G$, such that for every $n$, $g(n) = \eta(\tau^n(c))$; and Proposition 3.9 of \cite{leibman:05} shows that the extension $\hat G$ of $\tilde G$ by $\tau$ is again a nilpotent group. Here, saying that $\tau$ is a unipotent automorphism means that the mapping $\xi(a) = \tau(a) a^{-1}$ has the property that $\xi^q$ is the identity for sufficiently large $q$. Leibman's proof of Proposition 3.9 gives an explicit bound on how large $q$ has to be and the nilpotence class of $\tilde G$; and Proposition 1 of Gruenberg \cite{gruenberg:59} then provides the requisite bound on the nilpotence class of $\hat G$.

With this lemma in hand, we can prove Theorem~\ref{uniform:walsh:theorem}. 

\begin{proof}
As above, we can restrict attention to the case where $\| f_i \|_\infty \leq 1$ in the statement of the theorem. Using Lemma~\ref{nilpotent:trick:lemma}, we can moreover assume $d = 2l$, $s = 1$, and for every $i$, $p_{i,2i}(n) = n^i$, $p_{i, 2i + 1}(n) = 1$, and $p_{i,j} = 0$ for all other $j$, so that the $i$th polynomial sequence is given by $g_{i,n} = T_{2i}^n T_{2i+1}$.  

Once again, we fix $r$, $l$, and $\rho$, and use Theorem~\ref{main:theorem} with $\varepsilon = 1 / \rho$. Suppose we are given, for each $k$, a probability space $(X_k, \mdl X_k, \mu_k)$, a group $G_k$ of nilpotence class at most $r$, elements $T_{1,k}, \ldots, T_{l,k}$, and elements $f_{1, k}, \ldots, f_{d, k}$ with infinity norm at most 1. Let $(X, \mdl X, \mu)$ be the result of applying the Loeb construction to the sequence of spaces $(X_k, \mdl X_k, \mu_k)$, let $G$ be the ultraproduct of the sequence $(G_k)$ with respect to $D$. For each $i$ let $T_i = \left( \prod_k T_{i,k} \right)_D$, and for each $j$ let $f_j = \left( \prod_k f_{j,k} \right)_D$. Then $G$ has nilpotence class at most $r$, and each $T_i$ is measure-preserving transformation of $X$, so Theorem~\ref{walsh:theorem} implies convergence of the sequence $(A_n (\vec f))$. By Theorem~\ref{main:theorem}, this implies a uniform bound on the rate of metastability.
\end{proof}

Tao \cite{tao:blog:12} shows that one can alternatively formulate Walsh's theorem in algebraic terms, which allows one to avoid the reference to the Loeb construction in the proof of Theorem~\ref{uniform:walsh:theorem}. In fact, both Walsh's original proof \cite{walsh:12} and Tao's later proof of Walsh's result \cite{tao:blog:12} establish Theorem~\ref{uniform:walsh:theorem} directly. Tao's proof of his prior result \cite{tao:08} also established the corresponding uniformity, but there are now other proofs of that theorem that do not \cite{austin:10,host:09,towsner:09b}. Tao \cite{tao:blog:12} emphasizes that Theorem~\ref{uniform:walsh:theorem} is stronger than Theorem~\ref{walsh:theorem}; the observation here is that they are essentially the same, modulo compactness and Lemma~\ref{nilpotent:trick:lemma}.

We consider a final example, this time from nonlinear ergodic theory. Fix a Hilbert space $\mdl H$. Let $C$ be a closed, convex subset of $\mdl H$, and let $T$ be a nonexpansive map from $C$ to $C$. Let $(\lambda_n)$ be a sequence of elements of $[0,1]$, and let $f$ and $u$ be any elements of $C$. The \emph{Halpern iteration} corresponding to $T$, $(\lambda_n)$, $f$, and $u$ is the sequence given by
\[
 f_0 = f, \quad f_{n+1} = \lambda_{n+1} u + (1 - \lambda_{n+1}) T f_n.
\]
If $T$ is linear, $u = f$, and $\lambda_n = 1 / (n + 1)$, then $(f_n)$ is the familiar sequence $(A_n f)$ of ergodic averages. Wittmann \cite{wittmann:92} showed that, assuming the set of fixed points of $T$ is nonempty, the following conditions on the sequence $(\lambda_n)$ suffice to ensure that the sequence $f_n$ of Halpern iterates converges to the projection onto the space of fixed points:
\begin{itemize}
 \item $\lim_{n \to \infty} \lambda_n = 0$
 \item $\sum_{n = 1}^\infty \| \lambda_{n+1} - \lambda_n \|$ converges
 \item $\sum_{n = 1}^\infty \lambda_n = \infty$
\end{itemize}
In particular, these are satisfied when $\lambda_n = 1 / (n + 1)$.

The linear structure of $\mdl H$ only comes into play in the assumption that $C$ is convex. Seajung \cite{saejung:10} has generalized Wittmann's result to CAT(0) spaces. These are metric spaces with an abstract notion of ``linear combination,'' that is, metric spaces equipped with a function $W(x, y, \lambda)$ which, intuitively, plays the role of $(1 - \lambda) x + \lambda y$. The specific axioms that $W$ is assumed to satisfy can be found in \cite{bridson:haefliger:99,kohlenbach:leustean:12,saejung:10}; we only need the fact, established in \cite[pages 77--78]{bridson:haefliger:99}, that the ultraproduct of CAT(0) spaces is again a CAT(0) space. Saejung's theorem states the following:

\begin{theorem}
\label{saejung:thm}
Let $C$ be a closed convex subspace of a complete CAT(0) space, and let $T : C \to C$ be a nonexpansive map such that the set of fixed points of $T$ is nonempty. Suppose $(\lambda_n)$ satisfies the three conditions above. Then for any $u, f$ in $C$, the sequence of Halpern iterates $(f_n)$ converges to the projection of $u$ onto the set of fixed points of $T$. 
\end{theorem}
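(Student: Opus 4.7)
My plan is to follow Wittmann's Hilbert-space strategy, with linear combinations replaced by the CAT(0) averaging map $W(x, y, \lambda)$ and orthogonal projections replaced by geodesic projection onto the closed convex fixed-point set $F$ of $T$. Completeness of the ambient CAT(0) space together with closedness and convexity of $F$ ensures that the nearest-point projection $P_F u$ exists and is unique; call it $p$. The goal is to show $f_n \to p$.

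The argument breaks into three stages. First, show that $(f_n)$ is bounded: the CAT(0) convexity estimate for $W$, together with nonexpansiveness of $T$, gives inductively $d(f_n, p) \le \max\{d(f, p), d(u, p)\}$. Second, establish asymptotic regularity: a telescoping comparison of $f_{n+1}$ and $f_n$ produces an estimate of the form $d(f_{n+1}, f_n) \le (1 - \lambda_{n+1}) d(f_n, f_{n-1}) + C |\lambda_{n+1} - \lambda_n|$, with $C$ absorbing the bound on $(f_n)$; a Kronecker-type lemma using $\sum |\lambda_{n+1} - \lambda_n| < \infty$, $\lambda_n \to 0$, and $\sum \lambda_n = \infty$ then forces $d(f_{n+1}, f_n) \to 0$, and the Halpern recursion combined with $\lambda_n \to 0$ upgrades this to $d(f_n, T f_n) \to 0$.

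The third and main stage identifies the limit as $p = P_F u$. One shows $\limsup_n \Phi_u(f_n) \le 0$, where $\Phi_u$ is a CAT(0) surrogate for the Hilbert-space functional $x \mapsto \langle u - p, x - p \rangle$; the Berg--Nikolaev quasi-linearization makes it possible to mimic the Hilbert-space computations in this metric setting. Combined with the Halpern recursion, $\sum \lambda_n = \infty$, and boundedness, this feeds into a standard lemma on nonnegative sequences (Xu's lemma) to conclude $d(f_n, p)^2 \to 0$.

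The hard part is the third stage. Without an inner product there is no off-the-shelf weak convergence or demiclosedness principle, so one has to engineer a geometric substitute capable of pinning the subsequential limit behavior of $(f_n)$ to the fixed-point set. This is where the CAT(0)-specific machinery---$\Delta$-convergence in the sense of Lim and Kirk--Panyanak, the demiclosedness property for nonexpansive maps in CAT(0) spaces, and quasi-linearization---does the real work; the remainder is a careful transcription of the linear Halpern argument, and for a complete treatment one would refer to Saejung's paper.
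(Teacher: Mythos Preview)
The paper does not prove this theorem at all: it is stated as Saejung's result and attributed to \cite{saejung:10}, with no argument given. The paper's contribution is the \emph{next} theorem (the uniform metastability version), which takes Saejung's theorem as a black box and feeds it into the ultraproduct machinery of Theorem~\ref{main:theorem}. So there is no ``paper's own proof'' to compare your proposal against.

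That said, your sketch is a faithful outline of how Saejung's proof actually proceeds: boundedness via the CAT(0) convexity inequality for $W$, asymptotic regularity via a recursive estimate and the hypotheses on $(\lambda_n)$, and identification of the limit via $\Delta$-convergence, the demiclosedness principle for nonexpansive maps in CAT(0) spaces, Berg--Nikolaev quasi-linearization, and Xu's lemma. You are right that the third stage is where the genuine CAT(0) work lies, and your own final sentence already concedes that you are deferring the details to Saejung. For the purposes of this paper that is exactly the right posture: the theorem is quoted, not reproved, and the new content is the passage from convergence to uniform metastability via ultraproducts.
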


If $g$ is a fixed point of $T$ and $b = \max( \| f - g \|, \| u - g \| )$, then it is not hard to show that one can restrict attention to $C \cap B(g, b)$ in the statement of Theorem~\ref{saejung:thm}. In other words, there is no loss of generality in assuming that $C$ has a bounded diameter. Kohlenbach and Leu\c{s}tean \cite{kohlenbach:leustean:12} have shown that in that case there is a uniform bound on the rate of metastability, given by a primitive recursive functional, which depends on the diameter of $C$. If one is only interested in uniformity and not the particular rate, the following provides a quick proof:

\begin{theorem}
\label{uniform:saejung:thm}
Fix $(\lambda_n)$ satisfying (1--3) above. For every $\varepsilon > 0$, $M$, and function $F : \NN \to \NN$, there is a $K$ such that the following holds: given a CAT(0) space $(X, d, W)$, a closed convex subset $C$ of $X$ with diameter at most $M$, a nonexpansive map $T : C \to C$ with a fixed point in $C$, and $f, u$ in $C$, if $(f_n)$ denotes the sequence of Halpern iterates, then there is an $n \leq K$ such that $d(f_i, f_j) < \varepsilon$ for every $i, j$ in $[n, F(n)]$.
\end{theorem}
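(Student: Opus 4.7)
The plan is to apply Theorem~\ref{main:theorem} in the same spirit as the proofs of Theorems~\ref{uniform:met} and~\ref{uniform:walsh:theorem}. As noted just before the statement, we may assume that the convex subset coincides with the ambient CAT(0) space and has diameter at most $M$. Fix then $M$, $\varepsilon$, and $F$, and consider the collection $\mathcal{K}$ of all pairs $((X, d), (f_n)_{n \in \NN})$ for which $(X, d, W)$ is a complete CAT(0) space of diameter at most $M$, $T$ is a nonexpansive self-map of $X$ admitting a fixed point, $u, f$ are points of $X$, and $(f_n)$ is the Halpern iterate determined by these data together with the fixed parameter sequence $(\lambda_n)$. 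By Theorem~\ref{main:theorem}, it is enough to show that for any countable sequence of elements of $\mathcal{K}$ and any nonprincipal ultrafilter $D$ on $\NN$, the componentwise ultraproduct $(\bar f_n)$ converges in the metric ultraproduct.

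To verify this, fix a sequence $((X_k, d_k, W_k), T_k, u_k, f_k, (f_{k,n}))_{k \in \NN}$ together with fixed points $g_k$ of $T_k$. Since each $X_k$ has diameter at most $M$, the metric ultraproduct $(X, d)$ is well defined (the choice of anchor points is irrelevant), has diameter at most $M$, and is complete by a general property of ultraproducts over nonprincipal ultrafilters. The CAT(0) axioms furnish a uniform modulus of continuity for each $W_k$ in each variable, so $W = \left(\prod_k W_k\right)_D$ is a well-defined function on $X$, and by the result cited from \cite{bridson:haefliger:99} the triple $(X, d, W)$ is again a complete CAT(0) space. Nonexpansiveness of the $T_k$ supplies the required uniform continuity, so $T = \left(\prod_k T_k\right)_D$ is a nonexpansive self-map of $X$, and $g := (g_k)_D$ is a fixed point of $T$. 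A short induction on $n$ shows that $\bar f_n := (f_{k,n})_D$ satisfies the Halpern recursion in $(X, d, W)$ with parameters $T$, $(\lambda_n)$, $(f_k)_D$, and $(u_k)_D$; the fact that $(\lambda_n)$ does not vary with $k$ is what makes this step go through without difficulty.

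Saejung's theorem (Theorem~\ref{saejung:thm}) then applies to $(X, d, W)$, the nonexpansive map $T$, the fixed point $g$, and the sequence $(\lambda_n)$, yielding the convergence of $(\bar f_n)$ in the ultraproduct. The implication $(4) \Rightarrow (1)$ of Theorem~\ref{main:theorem} furnishes the desired uniform bound $K$, depending only on $M$, $\varepsilon$, $F$, and the prescribed sequence $(\lambda_n)$. The only nontrivial input specific to this setting is the preservation of the CAT(0) property under ultraproducts, which is precisely why the argument appeals to \cite{bridson:haefliger:99}; all other ingredients reduce to the standard machinery for lifting uniformly continuous maps on bounded metric structures to their ultraproducts.
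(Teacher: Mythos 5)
Your argument is correct and is essentially the paper's own proof: apply Theorem~\ref{main:theorem}, lift the data to the ultraproduct (CAT(0) by the Bridson--Haefliger result, diameter at most $M$, nonexpansive $T$ with a fixed point obtained as the ultraproduct of fixed points, and the same Halpern recursion since $(\lambda_n)$ is fixed across factors), invoke Theorem~\ref{saejung:thm} there, and conclude via the implication from (4) to (1). The only differences are minor: the paper keeps $C$ as a closed convex subset and notes that closedness, convexity, and the diameter bound pass to the ultraproduct, whereas you reduce to $C=X$ --- legitimate, but this rests on the standard fact that a convex subset of a CAT(0) space is itself a CAT(0) space under the restricted $W$, not on the paper's preliminary remark, which only justifies the bounded-diameter reduction --- and you should delete ``complete'' from the definition of your class $\mathcal{K}$, since the theorem does not assume completeness of $X$ and your argument never uses completeness of the factors, only of the ultraproduct, where it holds automatically.
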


\begin{proof}
  Once again, we apply Theorem~\ref{main:theorem}. We have already noted that the ultraproduct of CAT(0) spaces is again a CAT(0) space. The uniform bound on the diameter of each of the sets $C$ is also a bound on the diameter of their product. The fact that convexity is preserved is immediate, and it is not hard to show that an ultraproduct of closed sets is again closed (see, for example, \cite[Proposition 5.3]{ben:yaacov:et:al:08}).
\end{proof}

Theorem~\ref{uniform:saejung:thm} can also be seen as a consequence of Corollary 4.26 in Gerhardy and Kohlenbach \cite{gerhardy:kohlenbach:08}, modulo verification of the fact that Saejung's theorem can be derived in the formal axiomatic system mentioned there. That corollary ensures, moreover, that the bound is computable from the parameters.

Under the assumption that $C$ is a bounded, closed, convex subset of a CAT(0) space, Kirk \cite[Theorem 18]{kirk:03} shows that a nonexpansive map from $C$ to $C$ necessarily has a fixed point. Thus in the statement of Theorem~\ref{uniform:saejung:thm} that hypothesis could be dropped. Gerhardy and Kohlenbach note that, in more general situations, one can weaken the hypothesis that $T$ has a fixed point in $C$ to the hypothesis that $T$ has an $\varepsilon$-fixed point in $C$ for every $\varepsilon >0$. This is easy to see from the ultraproduct argument as well, since an ultralimit of $\varepsilon$-fixed points for a sequence $\varepsilon$ decreasing to $0$ is an actual fixed point. This fact is commonly used in applications of ultraproducts to fixed-point theory; see, for example, Aksoy and Khamsi \cite{aksoy:khamsi:90}.


\end{document}